\newtheorem{thm}{Theorem}
\newtheorem{lm}{Lemma}
\newtheorem{prop}{Proposition}
\newtheorem{crl}{Corollary}
\newtheorem{st}{Statement}
\newcounter{tdfn}
\newenvironment{dfn}
{\vspace{0.15cm}{\bf Definition \arabic{tdfn}.}} {\par
\addtocounter{tdfn}{1}}
\newcounter{trk}
\newenvironment{rk}
{\vspace{0.15cm}{\bf Remark \arabic{trk}.}} {\par
\addtocounter{trk}{1}}
{\endtrivlist}
\def\:{\colon}
\def\R{{\mathbb R}}
\def\Z{{\mathbb Z}}
\def\0{{\mathbf 0}}
\def\1{{\mathbf 1}}
\def\C{{\mathbb C}}
\def\R{{\mathbb R}}
\title{Invariants of classical braids valued in $G_{n}^{2}$}
\author{V.O.Manturov}
\date{}
\begin{document}

\maketitle


\begin{abstract}
The aim of the present note is to enhance groups $G_{n}^{3}$ and to construct new invariants of classical braids. In particular, we construct invariants valued in $G_{N}^{2}$ groups. In groups $G_{n}^{2}$, the identity problem is solved, besides, their structure is much simpler than that of $G_{n}^{3}$.
\end{abstract}

Keywords: braid, group, dynamics

AMS MSC: 57M25,57M27

\section{Introduction}
In the paper \cite{Great}, the author defined a family of groups $G_{n}^{k}$ depending on two natural numbers $n>k$ and formulated the following principle:

{\em If a dynamical system describing a motion of  $n$ particles, possesses a good generic property of codimension $1$ governed by exactly  $k$ particles, then this dynamical system has invariants valued in $G_{n}^{k}$.}

We shall not formulate the definition of good property in its full generality.

In  \cite{MN}, a partial case of this general principle was calculated explicitly: when considering a motion of $n$ pairwise distinct points on the plane and choosing the generic codimension $1$ property to be ``three points are collinear'' (for $k=3$), we get a homomorphism from the $n$-strand pure braid group $PB_{n}$ to the group $G_{n}^{3}$.  This allows one to get powerful invariants of classical braids.
In \cite{HigherGnk}, applications of groups  $G_{n}^{k}$ for $k>3$ to the study of fundamental groups of other configuration spaces are given.

 Let us define the group $`G_{n}^{3},n\ge 3$, which generalises $G_{n}^{3}$. The group $`G_{n}^{3}$ is given by a presentation having $3{n \choose 3}$ generators $a'_{ijk}$ which are indexed by triples of pairwise distinct numbers $i,j,k\in \{1,\cdots, n\}$ defined up to the order reversal. Thus, $a'_{123}=a'_{321}\neq a'_{132}$. The group $`G_{n}^{3}$ is given as:

$$`G_{n}^{3}=\langle a'_{ijk}|(1),(2),(3)\rangle,$$

where

$$(a'_{ijk})^{2}=1, i,j,k\in \{1,\cdots,n\} \mbox{ pairwise distinct };\;\;\;\eqno(1) $$
$$a'_{ijk}a'_{pqr}=a'_{pqr}a'_{ijk}, i,j,k,p,q,r\in \{1,\cdots, n\},Card\{i,j,k\})=Card(\{p,q,r\})=3;\eqno(2)$$

$$ Card(\{i,j,k\}\cap \{p,q,r\})\le 1 \;\;\;$$
 $$a'_{ijk}a'_{ijl}a'_{ikl}a'_{jkl}=a'_{jkl}a'_{ikl}a'_{ijl}a'_{ijk}, i,j,k,l\in\{1,\cdots, n\} \mbox{ pairwise distinct}.\eqno(3)$$.

The group $G_{n}^{3}$ can be obtained from $`G_{n}^{3}$ by identifying those generators, whose triples of indices coincide as sets: the generator $a_{ijk}$ of the group $G_{n}^{3}$ is equal to the image of any of $a'_{ijk},a'_{jik},a'_{ikj}$.

Note that $`G_{3}^{3}$ is the free product of three groups  $\Z_{2}$ and the group $`G_{4}^{3}$ has no relations of type (2), since any two subsets of cardinality 3 of the set $\{1,2,3,4\}$ have intersection of cardinality at least $2$. When considering the groups $G_{n}^{3}$ one usually requires $n>3$; however, the groups $`G_{3}^{3}$ are already interesting.

Later, we shall also need the group $G_{n(n-1)}^{2}$, however, in its definition (unlike that of $G_{N}^{2}$, see, e.g., \cite{Bardakov,Great}) the indices $p,q$  of generators $a_{p,q}$  will be not elements of the set $\{1,\cdots, n(n-1)\}$ but rather ordered pairs of distinct elements from  $1$ to $n$. Hence, for example, we have generators $a_{12,34},a_{12,31}$. The relations are standard: we have $a_{p,q}^{2}=1$ for each generator $a_{p,q}$ as well as $a_{p,q}a_{r,s}=a_{r,s}a_{p,q}$ for pairwise distinct ordered $p,q,r,s$ and $(a_{p,q}a_{p,r}a_{q,r})^{2}=1$ for pairwise distinct $p,q,r$.

For example, $a_{12,23}$ commutes with $a_{21,24}$, since $\{1,2\}$ are $\{2,1\}$
distinct ordered sets.

For our paper, the following lemma is crucial.
\begin{lm}
The map $\phi:`G_{n}^{3}\to G_{n(n-1)}^{2}$, which takes $a'_{ijk} \mapsto a_{ij,ik} a_{kj,ki},$ is well defined.\label{l1}
\end{lm}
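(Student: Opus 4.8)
The plan is to show that $\phi$, defined on the generators $a'_{ijk}$ of $`G_{n}^{3}$, carries every defining relation of $`G_{n}^{3}$ to a valid identity in $G_{n(n-1)}^{2}$, whence it extends to a group homomorphism. The one standing fact I will use about $G_{n(n-1)}^{2}$ is that its generators $a_{p,q}$ and $a_{r,s}$ commute as soon as the ordered pairs $p,q,r,s$ are pairwise distinct. The verification then has four parts: compatibility with the identification $a'_{ijk}=a'_{kji}$, and relations (1), (2), (3).

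Compatibility and relations (1), (2) are quick. Applying the rule to $a'_{kji}$ yields $a_{kj,ki}\,a_{ij,ik}$; since $i,j,k$ are distinct, the four ordered pairs $ij,ik,kj,ki$ are pairwise distinct, so $a_{ij,ik}$ and $a_{kj,ki}$ commute and $\phi(a'_{kji})=a_{ij,ik}\,a_{kj,ki}=\phi(a'_{ijk})$. The same commutation, together with $a_{p,q}^{2}=1$, gives $\big(\phi(a'_{ijk})\big)^{2}=a_{ij,ik}^{2}\,a_{kj,ki}^{2}=1$, which is (1). For (2): if $\{i,j,k\}$ and $\{p,q,r\}$ meet in at most one point, then no ordered pair with both entries in $\{i,j,k\}$ can equal one with both entries in $\{p,q,r\}$ (equality would force two distinct entries into the intersection), so each of the two factors of $\phi(a'_{ijk})$ commutes with each of the two factors of $\phi(a'_{pqr})$, and the images commute.

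Relation (3) is the real work. I would expand $\phi(a'_{ijk})\,\phi(a'_{ijl})\,\phi(a'_{ikl})\,\phi(a'_{jkl})$ into a product of eight generators of $G_{n(n-1)}^{2}$, then use the commutation fact to regroup these factors according to the common first entry of the ordered pairs occurring in them; the factors sharing a first entry form short subwords of the form $a_{p,q}a_{p,r}a_{q,r}$, which the triangle relation $(a_{p,q}a_{p,r}a_{q,r})^{2}=1$ turns into involutions. The aim is that, after commuting the groups past one another and applying these triangle relations, the word collapses to $1$; the second equality in (3), $\phi(a'_{jkl})\phi(a'_{ikl})\phi(a'_{ijl})\phi(a'_{ijk})=1$, then follows by inverting the first identity, since each of $\phi(a'_{ijk}),\phi(a'_{ijl}),\phi(a'_{ikl}),\phi(a'_{jkl})$ is an involution by relation (1).

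The main obstacle is exactly this last step: one must determine precisely which of the many ordered pairs among the eight factors coincide, confirm that the commutations actually available suffice to line the factors up into the intended triangle subwords, and check that nothing is left over after the cancellations. Once the commutation criterion is isolated, everything outside (3) is mechanical; the combinatorial bookkeeping inside (3) is where the care lies.
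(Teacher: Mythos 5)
Your handling of the identification $a'_{ijk}=a'_{kji}$ and of relations (1) and (2) is correct and matches the paper. The gap is in relation (3), exactly at the step you defer. Carry out your own regrouping: the image of $a'_{ijk}a'_{ijl}a'_{ikl}a'_{jkl}$ is
$$(a_{ij,ik}a_{ij,il}a_{ik,il})\,(a_{lj,li}a_{lk,li}a_{lk,lj})\,a_{jk,jl}\,a_{kj,ki},$$
four pairwise commuting blocks: two triangle words (pairs with first entry $i$, resp.\ $l$) and two \emph{single} leftover letters (first entries $j$ and $k$) --- so the eight factors do not all organize into triangles, contrary to your description. More importantly, none of these blocks is trivial: the relation $(a_{p,q}a_{p,r}a_{q,r})^{2}=1$ makes a triangle word an involution, not the identity, and $a_{jk,jl}$, $a_{kj,ki}$ are single generators. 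In fact the displayed element is already nontrivial in the abelianization of $G_{n(n-1)}^{2}$, since each of the eight (distinct) generators occurs exactly once. So the word cannot ``collapse to $1$'' as you intend, and the plan for (3) fails as stated; it also cannot be patched, because the image of the fourfold product genuinely is not $1$.

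What is true --- and what the paper's own check establishes --- is that $\phi$ kills the \emph{square} $(a'_{ijk}a'_{ijl}a'_{ikl}a'_{jkl})^{2}$: since the four blocks above commute pairwise, squaring gives $(a_{ij,ik}a_{ij,il}a_{ik,il})^{2}(a_{lj,li}a_{lk,li}a_{lk,lj})^{2}a_{jk,jl}^{2}a_{kj,ki}^{2}=1$. Equivalently, $\phi$ sends the product $a'_{ijk}a'_{ijl}a'_{ikl}a'_{jkl}$ and the reversed product $a'_{jkl}a'_{ikl}a'_{ijl}a'_{ijk}$ to the same element. That ``product equals reversed product'' form (equivalently, the square of the relator is trivial) is the quadruple-point relation actually used in the paper --- compare relation (6) and the description of the codimension-two quadruple point in the proof of Theorem 1 --- and it is what your verification should target; the terminal ``$=1$'' in the displayed relation (3) cannot be read as asserting that the fourfold product itself is trivial, since, as shown above, its $\phi$-image is not. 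Recast your step for (3) as the computation of $\phi$ on the squared relator (or on ``LHS equals reversed LHS'') and the rest of your argument goes through.
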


First note that the two terms $a_{ij,ik}$ and $a_{kj,ki}$ commute, which yields $\phi(a'_{ijk})=\phi(a'_{kji})$ that we need, since $a'_{kji}=a'_{ijk}$.

This lemma follows from a direct check. We shall check the most interesting case:
$$(a'_{ijk}a'_{ijl}a'_{ikl}a'_{jkl})^{2}\mapsto (a_{ij,ik}a_{kj,ki}a_{ij,il}a_{lj,li}a_{ik,il}a_{lk,li}a_{jk,jl}a_{lk,lj})^{2}=$$

$$=(a_{ij,ik}a_{ij,il}a_{ik,il})^{2}(a_{lj,li}a_{lk,li}a_{lk,lj})^{2}a_{jk,jl}^{2}a_{kj,ki}^{2}=1.$$

Note that this lemma has its own importance. For the group $G_{N}^{2}$ we have a
simple minimality criterion (see, e.g., \cite{Coxeter}): a word $g$ in standard presentation of $G_{N}^{2}$ has minimal length if and only if no word ${\tilde g}$ equivalent to $g$ by means of exchange relations $a_{p,q}a_{p,r}a_{q,r}\mapsto a_{q,r}a_{p,r}a_{p,q}$ and commutativity relation $a_{pq}a_{rs}\mapsto a_{rs}a_{pq}$ contains two equal letters $a_{p,q}a_{p,q}$ in order (in our case $N=n(n-1)$ each of the letters $p,q,r,s$ itself has two indices). Hence, we get a {\em sufficient minimality condition} for words from $`G_{n}^{3}$: if the image $\phi(\alpha)$ is minimal, then $\alpha$ itself is minimal.

The author does not know whether the map $\phi$ is injective; this is an important open problem. Another problem is whether one can construct some map analogous to the map $\phi$ from $G_{n}^{3}$ to some $G_{M(n)}^{2}$ (for sufficiently large $M(n)$ depending on $n$). Their positive solution might shed light on the word problem for groups $G_{n}^{k}$ for $k>2$.

\section{Construction of the main invariant}

Let us now construct the map $f$, which maps a pure braid $\beta\in PB_{n}$ ($n\ge 3$) to an element from $`G_{n}^{3}$. We shall deal with pure braids, with points in the initial and final moments uniformly distributed over the unit circle: $z_{j}(0)=exp(\frac{2\pi j}{n})$. By a {\em braid} we mean a set of smooth functions  $\beta(t)=\{z_{1}(t),\cdots,z_{n}(t)\}, t\in [0,1]$ valued in $\C^{1}=\R^{2}$ such that $\beta(0)=\beta(1)$ coincides with the set of values mentioned above, and all $z_{i}(t)$ are pairwise distinct for any $t$. By a {\em critical moment} we mean such a value of $t$ for which there are some three indices  $i,j,k$ such that $z_{i}(t),z_{j}(t),z_{k}(t)$ are collinear.
We say that the braid is {\em good and stable} if:
\begin{enumerate}
\item the number of critical values of this braid is finite;
\item for each critical moment  $t$ there exist exactly one triple of indices $(i,j,k)$ for which $z_{i}(t),z_{j}(t),z_{k}(t)$ are collinear;
\item (stability) the number of critical moments does not change after any small perturbation of the braid.
\end{enumerate}

Every braid can be made good and stable by an arbitrarily small perturbation.

With a good and stable pure braid $\beta$ we naturally associate a word in generators $a'_{ijk}$: with each critical moment $t_{l}$ with three corresponding collinear points with indices $(i_{l},j_{l},k_{l})$ with  $j_{l}$ in the middle, we associate the generator $a'_{i_{l},j_{l},k_{l}}$. The word $f(\beta)$ is the product of all generators corresponding to critical moments as  $t$ grows.

\begin{thm}
The map $\beta\mapsto f(\beta)$ constructed above is a homomorphism from $PB_{n}$ to $`G_{n}^{3}$\label{beta}.
\end{thm}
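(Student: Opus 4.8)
To prove that $f\colon PB_n\to{}`G_n^3$ is a well-defined homomorphism, I would proceed in two stages: first show that $f(\beta)$ depends only on the homotopy class of $\beta$ (well-definedness), then show $f(\beta_1\beta_2)=f(\beta_1)f(\beta_2)$ (the homomorphism property). The second stage is essentially free: concatenating two good and stable braids (after a small perturbation to keep the concatenation good and stable, and noting the generic configuration at $t=1/2$ is non-critical) produces the word obtained by juxtaposing the two words read off as $t$ grows, so $f(\beta_1\beta_2)=f(\beta_1)f(\beta_2)$ on the nose. The content is all in the first stage.

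**Well-definedness via generic homotopies.** The standard approach is to take a generic homotopy $\beta_s$, $s\in[0,1]$, between two good and stable representatives of the same braid class, and track how the word $f(\beta_s)$ changes. Generically, $f(\beta_s)$ is locally constant except at finitely many values of $s$ where the configuration undergoes a codimension-$1$ degeneration of the ``collinearity'' stratification. I would enumerate these events. The local moves are: (i) a pair of critical moments $t_l$ is born or dies (a ``tangency'' of the curve $\beta_s$ with the collinearity wall), which inserts or deletes $a'_{ijk}a'_{ijk}$ — killed by relation (1); (ii) two critical moments with index triples sharing at most one element swap order in $t$ — handled by relation (2); (iii) the genuinely three-dimensional event, where at one instant \emph{four} points $z_i,z_j,z_k,z_l$ become collinear simultaneously, so that in a neighborhood the four critical moments involving the triples from $\{i,j,k,l\}$ get reshuffled. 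The reshuffling reads off precisely the word $a'_{ijk}a'_{ijl}a'_{ikl}a'_{jkl}$ on one side and $a'_{jkl}a'_{ikl}a'_{ijl}a'_{ijk}$ on the other, so relation (3) is exactly what is needed. One must also check the ``partial collinearity'' codimension-$1$ events where exactly three points collide (two of the three $z$'s coincide): here the braid condition (points pairwise distinct) forbids the collision, so this does not occur in a legitimate homotopy of braids — this is where it matters that we work in $PB_n$ and not in a larger configuration space.

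**The main obstacle.** The crux — and the step I expect to be most delicate — is the quadruple-collinearity event (iii): one must verify that near such an instant the cyclic order in which the six lines through pairs of $\{i,j,k,l\}$ pass through configurations making a triple collinear is governed by a one-parameter family reproducing relation (3) with the \emph{correct} middle indices (the subscript convention $a'_{ijk}=a'_{kji}$ with $j$ in the middle). This amounts to a careful local model: parametrize perturbations of four collinear points and read off, as the perturbation parameter and $t$ vary, the sequence of sub-triples that become collinear; one checks the two temporal orderings of the tangency events give the two words appearing in $(3)$. A secondary subtlety is confirming the move list is \emph{complete}: since ``three points collinear'' is a codimension-$1$ condition and ``four points collinear'' codimension $2$, while ``two disjoint collinear triples simultaneously'' and ``a tangency of the path with the wall'' are the remaining codimension-$2$ phenomena, a transversality argument shows a generic homotopy meets only these, so relations (1), (2), (3) suffice. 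Granting the local analysis, invariance — hence the homomorphism property — follows.
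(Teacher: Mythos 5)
Your proposal is correct and follows essentially the same route as the paper: invariance under isotopy is reduced to the codimension-two events (unstable triple point, two independent simultaneous triples, four collinear points), which correspond exactly to relations (1), (2), (3) of $`G_{n}^{3}$, with the delicate point being the local analysis of the quadruple-collinearity event reproducing relation (3) with the correct ordering of indices along the line. The paper treats this step at the same level of detail (citing the analogous argument of \cite{MN} and noting that $a'_{ijk}$ cannot be adjacent to $a'_{ikl}$ in the cyclic deformation), so your outline matches its proof.
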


\begin{rk}
For braids which are not pure, the method mentioned above also defines a map from braids to elements of $PB_{n}$, if, for example, we fix positions in the initial and terminal moment, but do not fix their order. However, this map is not a homomorphism.
\end{rk}

The proof of this theorem is essentially the same as that of the theorem from \cite{MN} about the map from $PB_{n}$ to $G_{n}^{3}$. We use the standard principle \cite{Great}, saying that in order to study braid isotopy, it suffices to consider singularities of codimension two. As codimension one singularities (triples of collinear point) give rise to generators, codimension two singularities give rise to relations, namely, we get the following list of codimension two singularities (see \cite{MN}):

\begin{enumerate}
\item unstable triple point which disappears after a small perturbation; this corresponds to the relation ${a'_{ijk}}^{2}=1$;

\item coincidence of two moments when two independent triple points appear. This corresponds to the relations
$a'_{p}a'_{q}=a'_{q}a'_{p}$, where triples $p,q$ have no more than one index in common;

\item four collinear points; this gives rise to the relation where on the left hand side we have a product of four generators, and on the right hand side we have a product of the same generators in the reverse order. Otherwise we can represent this relation as a ``cyclic'' deformation of the dynamics consisting of eight elementary deformations.

    The above four generators are all possible triples of indices among the four given indices, for example,  $a'_{ijk},a'_{ijl},a'_{ikl},a'_{jkl}$. The only novelty in comparison with  \cite{MN} is that the relation in  $`G_{n}^{3}$ is more exact than that in $G_{n}^{3}$ considered in \cite{MN}. Considering the line passing through a quadruple of points and assigning numbers $i,j,k,l$ to these points as we encounter them along the line, one can directly check that in the cyclic deformation the generator $a'_{ijk}$ (corresponding to three collinear points with point number $j$ in the middle) can not be next to the generator $a'_{ikl}$. Thus, we get some cycle, which can be obtained from the LHS of (3) by a cyclic permutation and order reversal. There are eight such words.

    Since the square of each of the generators is equal to the unit, the order reversal gives rise to the inverse word.
\end{enumerate}

With the product of pure braid we associate the product of words by definition.

From the above, we get the Proof of Theorem \ref{beta}.

The group $PB_n$ of pure   $n$-strand braid group can be given by the following
generators and relations:

$$b_{ij}b_{kl}=b_{kl}b_{ij},\quad  i<j<k<l \mbox{ or } i<k<l<j,$$
$$b_{ij}b_{ik}b_{jk} = b_{ik}b_{jk}b_{ij} = b_{jk}b_{ij}b_{ik}, \quad i<j<k,$$
$$b_{jl}b_{kl}b_{ik}b_{jk}=b_{jl}b_{kl}b_{ik}b_{jk},\quad i<j<k<l.
$$

For any distinct indices $i,j$, $1\le i,j\le n$ we define the element ${c'}_{i,j}$ of $`G_{n}^3$ as$${c'}_{i,j}=\prod_{k=j+1}^n a'_{j,i,k}\cdot \prod_{k=1}^{j-1} a'_{j,i,k}$$
(the product is taken over all $k\neq i,k\neq j$).

\begin{prop}
$$f(b_{ij})\mapsto {c'}_{i,i+1}^{-1}\dots {c'}_{i,j-1}^{-1}{c'}_{j,i}^2 {c'}_{i,j-1}\dots {c'}_{i,i+1}, i<j.$$
\end{prop}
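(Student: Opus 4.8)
The plan is to compute $f(b_{ij})$ ($i<j$) directly from an explicit good and stable planar representative of the generator. One realises $b_{ij}$ as the motion in which point $i$ is first carried to a position adjacent to point $j$ by a chain of elementary moves, each swapping $i$ with one of the intermediate points $i+1,\dots,j-1$ in turn, then clasped with $j$ by a planar full twist of the two now-adjacent points, and finally carried back by retracing the chain. Although $f$ is not a homomorphism on non-pure braids, the procedure of reading off the word of collinear triples is defined along \emph{any} path in $\mathrm{Conf}_n(\C)$ and is additive under concatenation; hence the word $f(b_{ij})$ of the whole closed (pure) loop is literally the product of the words read off along the successive elementary pieces, and only its invariance under homotopy invokes Theorem~\ref{beta}.

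The core is a local computation identifying the building block ${c'}_{i,m}$. I would show that the elementary move swapping point $i$ with point $m$, with $i$ passing on the side of $m$ facing the rest of the configuration, contributes exactly the word ${c'}_{i,m}$ --- the collinear events occurring in the cyclic order of third indices $k=j+1,\dots,n,1,\dots,j-1$ appearing in the definition of ${c'}_{i,m}$ --- while the time-reversed move contributes ${c'}_{i,m}^{-1}$. The geometry forcing this is that $m$ is near the unit circle, so the directions from $m$ to the remaining (momentarily fixed) points fill essentially a half-plane; the move carries $i$ once across each of the corresponding lines, always on the side where $i$ lies between $m$ and the third point, producing one factor $a'_{m,i,k}$ for each $k$, in the stated order.

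The full twist is the delicate step. After the elementary moves have brought $i$ and $j$ to adjacent positions with every other point lying to one side, the clasp is cut into two half-turns; reading these off and rewriting --- using the squaring relations of type (1) and, crucially, the exchange relations of type (3) in $`G_{n}^{3}$ --- one must massage the resulting word into the form ${c'}_{j,i}^{2}$, rather than a word that mixes letters of ${c'}_{j,i}$- and ${c'}_{i,j}$-type. The two outer halves of the chain contribute ${c'}_{i,i+1}^{-1}\cdots{c'}_{i,j-1}^{-1}$ and ${c'}_{i,j-1}\cdots{c'}_{i,i+1}$ respectively, so concatenation yields exactly the claimed expression --- after a final check, using the relations of $`G_{n}^{3}$, that the naively concatenated word needs no further collapse (the only nontrivial simplifications being those inside the full-twist block and, possibly, between adjacent blocks).

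This can be organised as an induction on $j-i$, with base case $b_{i,i+1}\mapsto{c'}_{i+1,i}^{2}$ (no conjugating factors present), though each step changes both the core and the conjugating part, so the direct concatenation above may in the end be the cleanest route. I expect the genuine obstacle to be this combinatorics --- establishing the exact temporal order of the collinear events within each elementary move and, above all, which point is in the middle at each event, since this is precisely the data that distinguishes a factor ${c'}_{i,m}$ from a word with the wrong middle indices --- with the contribution of the full twist as ${c'}_{j,i}^{2}$ the single most technical point.
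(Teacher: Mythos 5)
Your proposal is correct and takes essentially the same route as the paper: one realises $b_{ij}$ by an explicit good and stable dynamics (point $i$ slides along the interior of the circle past $i+1,\dots,j-1$, clasps with $j$, and returns), reads off the collinear triples piecewise, and identifies each passage past an intermediate point $m$ with a factor ${c'}_{i,m}^{\pm 1}$ and the clasp with the square term. The only notable difference is that with the paper's choice of clasp (point $j$ passes over point $i$ and back, Fig.~\ref{fig:bij_moves}) the middle block is read off directly as a square, so the rewriting by relations (1) and (3) that you anticipate as the delicate step is not actually needed.
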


\begin{proof}
Let us consider the $n$-point configuration $z_k(0)=e^{2\pi ik/n}, k=1,\dots,n$ on the plane $\mathbb R^2=\mathbb C$ (all points lie on the same circle $C=\{z\in\mathbb C\,|\,|z|=1\}$).

For each $i<j$ the braid $b_{ij}$
can be represented by the following dynamical system:
\begin{enumerate}
\item the point $i$ moves along the interior side of the circle $C$, passes by the points $i+1, i+2,\dots, j-1$ and land on the circle before the point $j$ (Fig.~\ref{fig:bij_moves} upper left);
\item the point $j$ moves over the point $i$ (Fig.~\ref{fig:bij_moves} upper right);
\item the point $i$ returns to its initial position over the points $j,j-1,\dots, i+1$ (Fig.~\ref{fig:bij_moves} lower left);
\item the point $j$ returns to its position (Fig.~\ref{fig:bij_moves} lower right).
\end{enumerate}

 \begin{figure}
  \centering
  \begin{tabular}{cc}
    \includegraphics[width=0.3\textwidth]{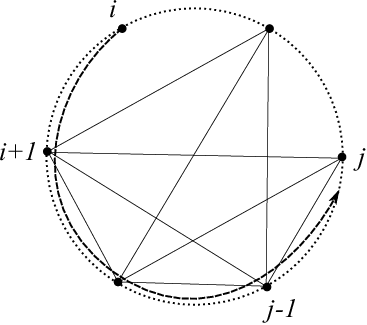} &
    \includegraphics[width=0.3\textwidth]{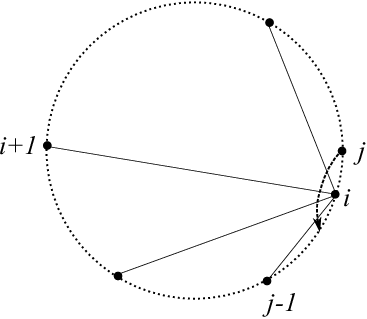} \\
    \includegraphics[width=0.3\textwidth]{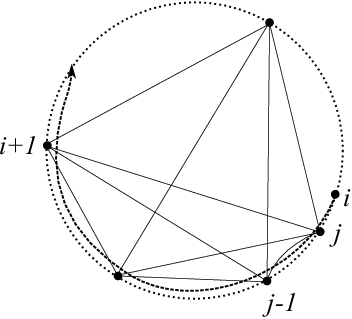} &
    \includegraphics[width=0.3\textwidth]{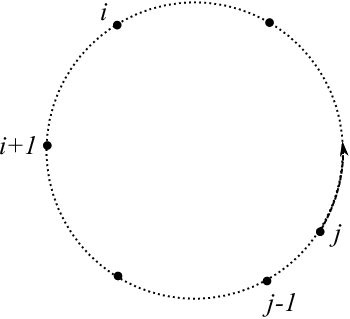}
  \end{tabular}
  \caption{Dynamical system corresponding to $b_{ij}$}\label{fig:bij_moves}
 \end{figure}

As we check all the situations in the dynamical systems where three points lie on the same line, and write down these situations as letters in a word of the group $G_{n}^{3}$, we exactly get the element $${c'}_{i,i+1}^{-1}\dots {c'}_{i,j-1}^{-1}{c'}_{i,j}^2 {c'}_{i,j-1}\dots {c'}_{i,i+1}.$$
\end{proof}

\begin{crl}
The map $\Phi=\phi\circ f$ is a homomorphism $PB_{n}\to G_{n(n-1)}^{2}$.
\end{crl}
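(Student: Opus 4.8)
The plan is to deduce the corollary directly from Theorem~\ref{beta} and Lemma~\ref{l1} by the general fact that a composition of group homomorphisms is again a group homomorphism; the whole point is simply to observe that $f$ and $\phi$ are genuine homomorphisms of groups, not merely maps defined on generating sets.

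First I would make precise what Lemma~\ref{l1} provides. The group $`G_{n}^{3}$ is given by the presentation $\langle a'_{ijk}\mid (1),(2),(3)\rangle$, so by the universal property of a group presentation (von Dyck's theorem) the assignment $a'_{ijk}\mapsto a_{ij,ik}\,a_{kj,ki}$ extends to a homomorphism $`G_{n}^{3}\to G_{n(n-1)}^{2}$ as soon as every defining relator is sent to the identity. The proof of Lemma~\ref{l1} establishes exactly this: relation (1) holds because the two factors $a_{ij,ik}$ and $a_{kj,ki}$ commute and each squares to $1$; relation (2) holds because the at-most-one-common-index hypothesis forces all ordered pairs appearing in the two images to be pairwise distinct, so the images commute; and the displayed eight-letter computation verifies relation (3). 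Hence $\phi$ is a well-defined group homomorphism.

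Next, Theorem~\ref{beta} asserts that $f\colon PB_{n}\to `G_{n}^{3}$ is a homomorphism: the word $f(\beta)$ depends only on the isotopy class of $\beta$ (codimension-one collinearities giving generators, codimension-two events producing precisely the relators (1)--(3)), and $f$ is multiplicative because concatenation of braids corresponds to concatenation of the associated words. Composing, $\Phi=\phi\circ f$ is a homomorphism $PB_{n}\to G_{n(n-1)}^{2}$. If an explicit formula is desired, one applies $\phi$ letterwise to the word given in the Proposition, replacing each occurrence of $a'_{j,i,k}$ by $a_{ji,jk}\,a_{ki,kj}$; this yields $\Phi(b_{ij})$ explicitly in the standard generators of $G_{n(n-1)}^{2}$.

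There is essentially no real obstacle here: the two substantive facts --- that $f$ respects braid isotopy and that $\phi$ respects the relations (1)--(3) --- are exactly the contents of Theorem~\ref{beta} and Lemma~\ref{l1}, and granting them the corollary is immediate. The only thing to watch is that one genuinely needs both maps to be homomorphisms and not merely well-defined set maps, which is precisely what has already been proved above.
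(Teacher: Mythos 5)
Your argument is exactly the paper's: the corollary is an immediate consequence of Lemma~\ref{l1} ($\phi$ is a well-defined homomorphism on the presentation of $`G_{n}^{3}$) and Theorem~\ref{beta} ($f$ is a homomorphism $PB_{n}\to `G_{n}^{3}$), composed together. The paper states it without further proof for precisely this reason, so your write-up is correct and matches the intended route.
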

As we see from the above construction, the map  $\Phi$ can be constructed without use of the auxiliary group $`G_{n}^{3}$: for a general position dynamics we can directly write down the product of two elements from $G_{n(n-1)}^{2}$.

The constructed map $f$ is a generalisation of quite a powerful invariant of classical braids constructed in $\cite{MN}$. The composition  $\Phi=\phi\circ f$ can be easily calculated for braids. For instance, for the generator of the pure three-strand braid group where the first and the second points are fixed  and the third point goes around the second one, we get the product of the four generators of the group  $G_{n(n-1)}^{2}$: two moments
corresponding to collinear triples of points in different orders, give rise to a word of length $4$ in $`G_{3}^{3}=\Z_{2}*\Z_{2}*\Z_{2}$.

Let us now construct a homomorphism from $`G_{n}^{3}$ to an automorphism of a free product of some copies of  $\Z_{2}$, which is ``spiritually'' similar to the Hurwitz action of the braid group on the free group.

Namely, we set the image of $g\sim g(a'_{ijk})$ to be  $g: a_{ij}\mapsto a_{ik}a_{ij}a_{ik},a_{kj}\mapsto a_{ki}a_{kj}a_{ki},
a_{m}\mapsto a_{m}, \mbox{where } m\neq \{ij\},\{kj\}$

\begin{thm}
The map $g$ is a well defined homomorphism.
\end{thm}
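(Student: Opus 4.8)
The plan is to define $g$ on the generators $a'_{ijk}$, verify compatibility with the order-reversal identification $a'_{ijk}=a'_{kji}$, check that each $g(a'_{ijk})$ really is an automorphism of the free product $H=\langle a_{pq}\mid a_{pq}^{2}=1\rangle$ of copies of $\Z_{2}$ (one generator $a_{pq}$ for each ordered pair of distinct indices, exactly as for $G_{n(n-1)}^{2}$ above), and finally check that the $g(a'_{ijk})$ satisfy the three families of defining relations $(1)$--$(3)$ of $`G_{n}^{3}$. Since $`G_{n}^{3}=\langle a'_{ijk}\mid (1),(2),(3)\rangle$, the universal property of a group presentation then delivers the homomorphism $g\colon`G_{n}^{3}\to\operatorname{Aut}(H)$.

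First I would dispose of the easy points. Interchanging $i$ and $k$ in the defining formula merely swaps the two rules $a_{ij}\mapsto a_{ik}a_{ij}a_{ik}$ and $a_{kj}\mapsto a_{ki}a_{kj}a_{ki}$ with one another and leaves every other generator fixed, so $g(a'_{ijk})$ and $g(a'_{kji})$ are literally the same map — this is the analogue of the remark following the statement of Lemma~\ref{l1}. Since every generator of $H$ is sent to a conjugate of a generator, the prescription is compatible with the relations $a_{pq}^{2}=1$ and so extends to an endomorphism of $H$; and applying it twice gives $a_{ij}\mapsto a_{ik}a_{ij}a_{ik}\mapsto a_{ik}(a_{ik}a_{ij}a_{ik})a_{ik}=a_{ij}$ (using that $a_{ik}$ is itself fixed), and similarly on $a_{kj}$, all other generators being fixed throughout; hence $g(a'_{ijk})^{2}=\mathrm{id}$. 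So $g(a'_{ijk})$ is bijective, hence an automorphism, and relation $(1)$ is verified simultaneously.

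Next, relation $(2)$: $g(a'_{ijk})$ only moves, or conjugates by, generators $a_{pq}$ with $p,q\in\{i,j,k\}$, so if $\{i,j,k\}$ and $\{p,q,r\}$ meet in at most one index then no generator of $H$ lies in the ``support'' of both $g(a'_{ijk})$ and $g(a'_{pqr})$, and two automorphisms with disjoint supports commute. The real content is relation $(3)$. Here I would fix a four-element set $\{i,j,k,l\}$, note that every $a_{pq}$ with $\{p,q\}\not\subseteq\{i,j,k,l\}$ is fixed by each of $g(a'_{ijk}),g(a'_{ijl}),g(a'_{ikl}),g(a'_{jkl})$, and then check by direct substitution, on each of the (at most twelve) remaining generators, that the product $g(a'_{ijk})g(a'_{ijl})g(a'_{ikl})g(a'_{jkl})$ and the product of the same four maps in the reverse order send it to the same element of $H$ — i.e.\ that the square of this length-four word acts trivially, which is exactly the form in which relation $(3)$ is used (cf.\ the check of relation $(3)$ for $\phi$ inside the proof of Lemma~\ref{l1}). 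The conjugating letters $a_{il},a_{li}$ and their partners thread through the four maps and cancel in pairs via $a_{pq}^{2}=1$. By the obvious relabelling symmetry of $\{i,j,k,l\}$ this covers relation $(3)$ for every ordering of the indices.

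I expect the one genuinely laborious step to be this relation $(3)$ computation: only a handful of generators are moved nontrivially, but — exactly as in Lemma~\ref{l1} — one has to keep the conventions perfectly straight (ordered rather than unordered index pairs, and which of the two ``edges through the middle index'' is conjugated by which), since under the wrong bookkeeping the cancellations fail. Everything else is routine, and once $(1)$--$(3)$ are checked the theorem follows.
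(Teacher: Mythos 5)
Your proposal is correct and follows essentially the same route as the paper: a direct check on generators that each $g(a'_{ijk})$ is an involutive automorphism (giving relation (1)), that disjoint index supports yield far commutativity (relation (2)), and that the tetrahedron relation holds in the form ``the product equals the same product reversed'', i.e.\ the square of the four-letter word acts trivially, which is exactly the form the paper verifies by computing the common action on the six moved generators. Your extra explicit steps (compatibility with $a'_{ijk}=a'_{kji}$ and the verification that the images are automorphisms) are details the paper leaves implicit, but the substance of the argument coincides.
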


\begin{proof}
The statement follows from a direct check of relations of $`G_{n}^{3}$. One can mention some similarity of the main relation (3) and the third Reidemeister move. The proof is similar to the proof of lemma \ref{l1}.

The action ${a'}_{ijk}^{2}$ conjugates the element $a_{ij}$
twice by $a_{ik}$, hence, the element $a_{ij}$ remains unchanged; the same is true for $a_{kj}$ and $a_{ki}$.

The
``far commutativity'' $a'_{p}a'_{q}=a'_{q}a'_{p}$ for $|p\cap q|\le 1$
follows from the fact that pairs of indices taking part in
conjugations coming from  $p$, are distinct from pairs of indices taking
part in conjugations coming from $q$.

For the relation $a'_{ijk}a'_{ijl}a'_{ikl}a'_{jkl}=a'_{jkl}a'_{ikl}a'_{ijl}a'_{ijk}$, the LHS and the RHS have
the same action:
$a_{ij}\mapsto a_{il}a_{ik}a_{ij}a_{ik}a_{il}, a_{ik}\mapsto a_{il}a_{ik}a_{il}, a_{jk}\mapsto a_{jl}a_{jk}a_{jl}$,
analogously one calculates the action of $a_{kj},a_{lj},a_{lk}$.

\end{proof}
\section{One more generalisation of groups $G_{n}^{k}$}

Now, let us give one more geometric construction, which gives rise to invariants of classical braids. As the constructions, presented above, it relies upon {\em general position codimension 1 property}.

Namely, we consider a motion of  $n$ points $z_{j}(t),j=1,\cdots, n$ inside the unit circle $D=\{|z|< 1\}=\{x,y,x^{2}+y^{2}<1\}$. One can readily check that through any two points inside the circle one can draw exactly two circles tangent to the absolute $|z|=1$. We are interested in those moments, where

{\em some three points $z_{i}(t),z_{j}(t),z_{k}(t)$ lie on a circle tangent to the absolute.}

Later on, when considering the circle tangent to the absolute, we shall enumerate points on this circle starting from the tangency point in the counterclockwise direction. We shall say that the point $a$ {\em precedes} the point $b$ if when passing the circle starting from the point $X$ counterclockwise, we first encounter $a$ and then $b$.

\begin{figure}
\centering\includegraphics[width=200pt]{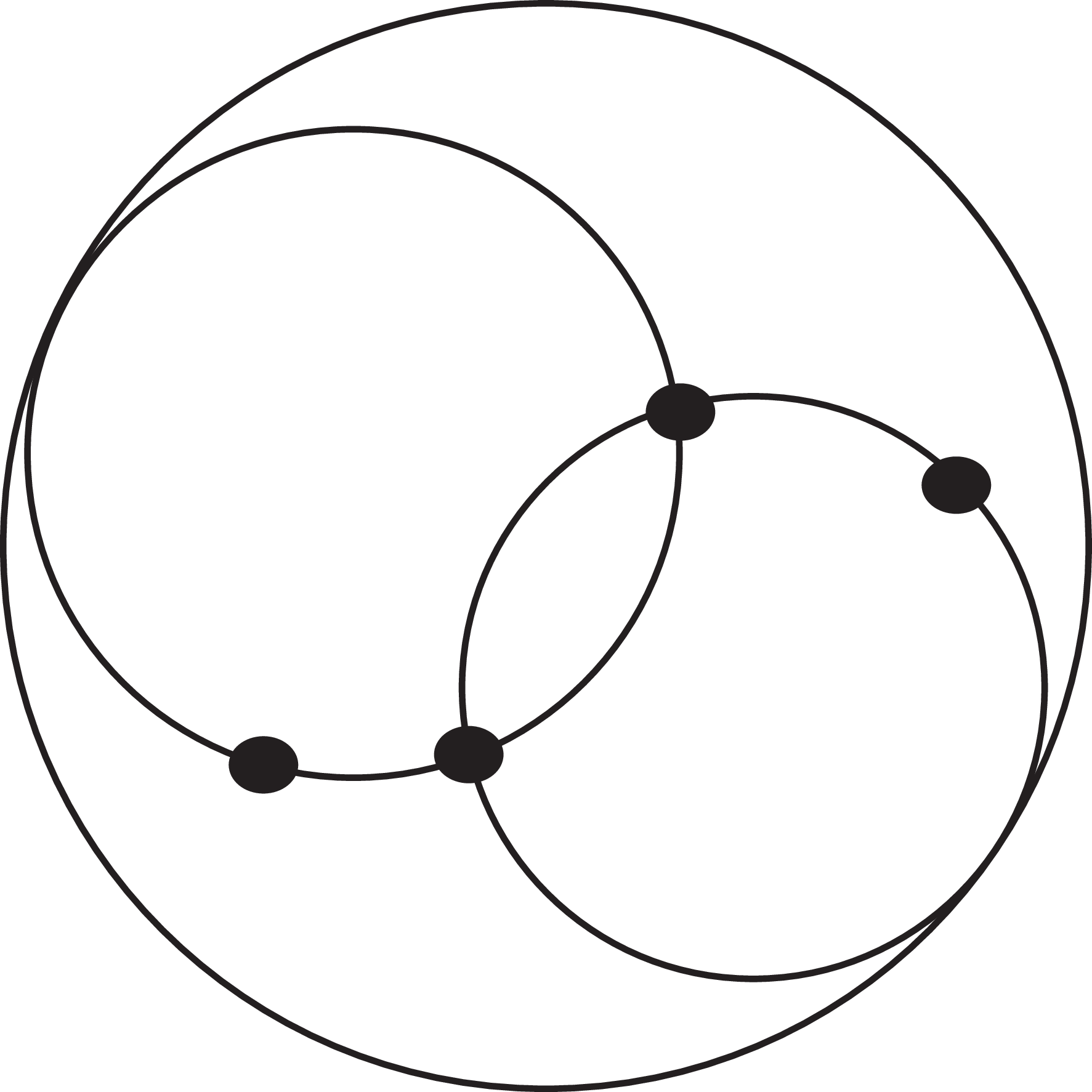}
\caption{Two circles tangent to the absolute}
\label{}
\end{figure}

Unlike the $k$-properties considered in \cite{Great,MN}  (``three points are collinear'' for $k=3$, ``four points are on the same circle'' for $k=4$), this property has the following subtlety. There are two circles passing through the two points and tangent to the absolute.
More precisely, this disadvantage can be formulated as follows:

{\em from the fact that three points $a,b,c$
belong to a circle tangent to the absolute
and three points $a,b,d$ belong to a circle tangent to the absolute, it does not follow that all four points $a,b,c,d$ belong
to a circle tangent to the absolute.
}

One can easily see that the following statement holds.
\begin{st} If points $a,b,c$ belong to the same circle tangent to the absolute and points $a,b,d$ belong to the same circle tangent to the absolute and in both circles $a$ precedes $b$, then  $a,b,c,d$ belong to the same circle tangent to the absolute.\label{stm}
\end{st}

The ability to work with such situations is quite important, since when studying dynamical systems of motions of several points generic codimension 1 properties can be related to more complicated curves than just lines or circles, so that there are a fixed number of curves of the given type passing through fixed $k-1$ points.
This leads to the generalisation of the $G_{n}^{k}$-approach and to the following group (in our case $k=3$).
Net let us fix the number of points $n$.

\begin{dfn}
The group $``G_{n}^{3}$ is

$$``G_{n}^{3}=\langle a''_{ijk}|(4),(5),(6)\rangle,$$
having $n(n-1)(n-2)$ generators $a''_{ijk}$, where $i,j,k$ range all possible {\em ordered} triples of points from $1$ to $n$, and three types of relations:

$$ {a''_{ijk}}^{2}=1 \mbox{ for all pairwise distinct } i,j,k \;\;\;\;\;\eqno{(4)}$$

$$a''_{ijk}a''_{pqr}=a''_{pqr}a''_{ijk},\;\;\;\;\;\; \eqno{(5)}$$
if none of the ordered pairs $\{i,j\},\{i,k\},\{i,l\}$ coincides with any of the ordered pairs $\{p,q\},\{p,r\},\{q,r\}$.

$$(a''_{ijk}a''_{ijl}a''_{ikl}a''_{jkl})^{2}=1, p,q,r,s\mbox{ are pairwise distinct. }\;\;\;\;\ \eqno{(6)}$$
\end{dfn}

Note that the condition (5) significantly differs from (2). For example, $a''_{123}$  commutes with $a''_{421}$,
but does not commute with
 $a''_{134}$.

There is a natural homomorphism from $`G_{n}^{3}$ to $G_{n}^{3}$ which forgets ``primes'' and forgets the order of indices of generators. In the case of $``G_{n}^{3}$, there is no such evident homomorphism. Indeed, in  the group $``G_{n}^{3}$ for $n\ge 4$ one has, for example, the relation $a''_{123}a''_{421}=a''_{421}a_{123}$; the relation $a_{123}a_{124}=a_{124}a_{123}$ in $G_{n}^{3}$ does not hold. Moreover, if we take the quotient of the group $G_{n}^{3}$ by all commutativity relations of such sort, the resulting quotient group will be isomorphic to a direct product of groups $\Z_{2}$.

Now let dynamics of motion of  $n$ points inside the unit circle be given. Imposing natural ``general position'' conditions with respect to the circles tangent to the absolute, we can associate a word in letters $a''_{ijk}$ to this dynamics, as follows. At each critical moment $t_{l}$ we have three points on the circle $(i_{l},j_{l},k_{l})$, which are enumerated in the counterclockwise direction starting from the tangency point. With such a moment, we associate the generator $a''_{i_{l},j_{l},k_{l}}$. The word $g(\beta)$ is the
product of all generators corresponding to all critical moments, as  $t$ increases.

Analogously to Theorem \ref{beta} and the main theorem of  \cite{Great}, one can prove
\begin{thm}
The map $\beta\mapsto g(\beta)$ constructed above, is a homomorphism from the pure braid group $PB_{n}$ to $``G_{n}^{3}$\label{gamma}.
\end{thm}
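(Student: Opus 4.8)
The plan is to run, in this new setting, exactly the scheme used for Theorem~\ref{beta} and for the main theorem of \cite{Great}: realise $PB_{n}$ as the fundamental group of the configuration space $X_{n}$ of $n$ pairwise distinct points of the open disc $D$, based at the uniform configuration $z_{j}(0)=e^{2\pi ij/n}$; stratify $X_{n}$ by the codimension-one stratum $\Sigma_{1}$ of configurations in which some three points lie on a circle tangent to the absolute, and use the standard principle of \cite{Great} that braid isotopy is governed by the singularities of codimension two. Then a generic loop reads off a word in the letters $a''_{ijk}$ by its transverse intersections with $\Sigma_{1}$, and a generic homotopy between two such loops changes that word only by the moves dictated by the codimension-two stratum $\Sigma_{2}$; the whole theorem reduces to checking that each such move is a relation of $``G_{n}^{3}$.

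First I would dispose of genericity and well-definedness. As in the remark ``Every braid can be made good and stable'', every class in $PB_{n}$ has a representative transverse to $\Sigma_{1}$, disjoint from $\Sigma_{2}$, meeting $\Sigma_{1}$ in finitely many points each carrying a single triple-circle incidence; and any two such representatives of one class are joined by a homotopy transverse to $\Sigma_{2}$ and disjoint from the codimension-three stratum, crossing $\Sigma_{2}$ finitely often. The one point needing care beyond standard transversality is the extra codimension-one locus ``the two circles through $z_{i},z_{j}$ tangent to the absolute coincide'': one must check that a generic homotopy can be taken disjoint from its codimension-two meetings with $\Sigma_{1}$, or else that crossing this locus changes no triple-circle incidence and so leaves $g(\beta)$ untouched. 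Near a point of $\Sigma_{1}$ exactly one ordered triple $(i,j,k)$ (ordered counterclockwise from the tangency point, an ordering that is locally constant) lies on a tangent circle, and that incidence is cut out by one function of the loop parameter with non-vanishing derivative, so the letter $a''_{ijk}$ is attached unambiguously and $g(\beta)$ is a well-defined word.

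The heart of the proof is the enumeration of the codimension-two strata and the verification that crossing each replaces $g(\beta)$ by an equivalent word in $``G_{n}^{3}$; as in \cite{MN} there are three types. An unstable triple-circle incidence, attained tangentially in the loop parameter, is a Morse birth/death of two consecutive equal letters and gives ${a''_{ijk}}^{2}=1$, relation (4). Two triple-circle incidences that exchange their order in time give a commutation, and here I would show that geometric independence of the two events is exactly the hypothesis of (5): if they share at most one point the three ordered pairs they determine are automatically disjoint, while if they share two points $x,y$ then by Statement~\ref{stm} having $x$ precede $y$ in both events would force all four involved points onto one tangent circle, i.e. the third type of stratum, so in a genuine exchange the two events induce opposite orderings of $x,y$ and again no ordered pair is shared — precisely condition (5), yielding $a''_{ijk}a''_{pqr}=a''_{pqr}a''_{ijk}$. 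Finally, four points $a,b,c,d$ on a common tangent circle: by Statement~\ref{stm} all four triples among them lie on that same circle, inheriting one common counterclockwise-from-tangency ordering, so the four letters produced on the two sides of $\Sigma_{2}$ form a word $a''_{ijk}a''_{ijl}a''_{ikl}a''_{jkl}$ and its reversal $a''_{jkl}a''_{ikl}a''_{ijl}a''_{ijk}$ for a consistent labelling $i,j,k,l$, as in the ``cyclic deformation into eight elementary deformations'' of \cite{MN} (and, as there, $a''_{ijk}$ cannot be cyclically adjacent to $a''_{ikl}$, which pins down the cyclic word); since each generator is an involution the reversed word is the inverse of the original, so the equality of the two sides reads $(a''_{ijk}a''_{ijl}a''_{ikl}a''_{jkl})^{2}=1$, which is relation (6). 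Concatenation of dynamical systems concatenates words, so once this homotopy invariance is established one gets $g(\beta_{1}\beta_{2})=g(\beta_{1})g(\beta_{2})$ and $g$ of the constant loop is the empty word; hence $g$ is a homomorphism $PB_{n}\to{``G_{n}^{3}}$.

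The main obstacle is this codimension-two analysis together with the two-tangent-circle subtlety: unlike the collinear case of \cite{MN}, two points do not determine a unique curve of the admissible family, so throughout one must keep careful track of which tangent circle and which cyclic ordering a given incidence refers to. Statement~\ref{stm} is exactly the device that makes the four-point stratum behave like the classical one, and matching ``geometric independence of two incidences'' with ``no common ordered pair'' in (5) is the delicate combinatorial point; one should also double-check that the list above of codimension-two degenerations is complete here — in particular that nothing new arises from the tangency point of a circle colliding with a moving point, or from the coincidence of the two tangent circles through a pair.
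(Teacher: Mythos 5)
Your proposal follows essentially the same route as the paper: a codimension-two analysis with the same three strata (unstable triple-circle incidence giving relation (4), exchange of two simultaneous incidences giving (5) — with Statement~\ref{stm} invoked exactly as in the paper to force opposite orderings when two points are shared — and four concircular points giving (6) via the reversed word of four involutive generators), plus concatenation for the homomorphism property. Your added attention to the auxiliary locus where the two tangent circles through a pair coincide and to completeness of the stratum list is a reasonable extra precaution, but it does not change the argument, which matches the paper's proof.
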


\begin{proof}
Analogously to Theorem \ref{beta}, let us enumerate all events of codimension 2 which will lead us to relations. The case of ``unstable triple point'' is completely analogous to the ``unstable triple point'' case from \ref{beta}: at some moment, we have three points on a circle tangent to the absolute, and this disappears after a small perturbation of dynamics. This corresponds to the relation $``a_{ijk}^{2}=1$.

In the same manner, one can deal with ``quadruple points''. Let our dynamics be such that at some moments some four points are on the same circle tangent to the absolute. Let us enumerate these points in the counterclockwise direction starting from the tangency point: $i,j,k,l$. After a small perturbation, the
quadruple point splits into four instances with triple points and corresponding generators $a''_{ijk},a''_{ijl},a''_{ikl},a''_{jkl}$. The ``opposite'' small perturbation gives rise to the product of the same four generators in the inverse order in such a way that $a''_{ijk}$ can not be next to $a''_{ikl}$.

The most important difference between the case of  $``G_{n}^{3}$ and the case of $`G_{n}^{3}$ is the situation with two ``independent simultaneous'' triples of points.

Assume that for some moment  $t$ for the  $n$ points, there are two triples of indices  $m=\{a,b,c\},m'=\{d,e,f\}$,
such that $(z_{a}(t),z_{b}(t),z_{c}(t))$ belong to a circle tangent to the absolute and
 $(z_{d}(t),z_{e}(f),z_{g}(t))$ belong to another circle tangent to the absolute and in each triple the indices are pairwise distinct.If
 $Card((a,b,c)\cap (d,e,f))\le 1$, then we get a commutativity relation (5) analogous to the relation (2). If the intersection consists of two indices, we get (unordered) triples $(a,b,c)$ and $(a,b,f)$, then, according to Statement \ref{stm}, for the same triple, the point $a$ precedes  $b$ when counting counterclockwise, and in the second triple, $b$ precedes $a$, which is also described by relation (5).

\end{proof}

The group $``G_{n}^{3}$ admits a natural homomorphism to the group $G_{N}^{2}$, where  $N=n(n-1)$,
and each of the indices $p,q$ of generators $a_{p,q}$ is an ordered pair of distinct elements from  $1$ to $n$.

Let us define the map $h:``G_{n}^{3}\to G_{N}^{2}$ by the formula
$$
h:a''_{ijk}\mapsto a_{ij,ik}.\;\;\;\;\eqno{(7)}
$$

\begin{thm}
The map constructed above is a homomorphism $h:``G_{n}^{3}\to G_{N}^{2}$.
\end{thm}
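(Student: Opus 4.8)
The plan is to verify that the assignment $h:a''_{ijk}\mapsto a_{ij,ik}$ respects each of the three families of defining relations (4), (5), (6) of $``G_{n}^{3}$; since $``G_{n}^{3}$ is given by a presentation, this suffices for $h$ to be a well-defined homomorphism. The verification is a direct check, entirely analogous in spirit to Lemma \ref{l1}, but now substantially simpler because each generator maps to a \emph{single} generator of $G_{N}^{2}$ rather than a product of two.

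First I would handle (4): $h({a''_{ijk}}^{2})=a_{ij,ik}^{2}=1$ by the relation $a_{p,q}^{2}=1$ in $G_{N}^{2}$, where here $p=ij$ and $q=ik$ are distinct ordered pairs (they share the first entry $i$ but differ in the second). Next, for the commutativity relation (5): if none of the ordered pairs $\{i,j\},\{i,k\}$ coincides with any of $\{p,q\},\{p,r\},\{q,r\}$, then in particular the two ordered pairs $\{i,j\},\{i,k\}$ are distinct from the two ordered pairs $\{p,q\},\{p,r\}$, so $a_{ij,ik}$ and $a_{pq,pr}$ are products on four pairwise distinct ``super-indices'' of $G_{N}^{2}$ and hence commute by the far-commutativity relation $a_{p,q}a_{r,s}=a_{r,s}a_{p,q}$. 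Here it is essential that the commutativity condition (5) of $``G_{n}^{3}$ was engineered precisely so as to translate into far-commutativity in $G_{N}^{2}$; this is exactly the point of Statement \ref{stm} and the discussion preceding the definition of $``G_{n}^{3}$.

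The main content is relation (6). Under $h$, the word $a''_{ijk}a''_{ijl}a''_{ikl}a''_{jkl}$ maps to $a_{ij,ik}\,a_{ij,il}\,a_{ik,il}\,a_{jk,jl}$. The first three factors $a_{ij,ik},a_{ij,il},a_{ik,il}$ are supported on the three ordered pairs $ij,ik,il$ — they form exactly a triple $a_{p,q}a_{p,r}a_{q,r}$ with $p=ij,q=ik,r=il$ (up to checking the ordering convention matches (3)), so $(a_{ij,ik}a_{ij,il}a_{ik,il})^{2}=1$ by the tetrahedron relation $(a_{p,q}a_{p,r}a_{q,r})^{2}=1$ of $G_{N}^{2}$. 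Moreover the fourth factor $a_{jk,jl}$ involves the super-indices $jk,jl$, each of which is distinct from all of $ij,ik,il$ (since the second entry $k$ or $l$ is paired with $j$, not $i$); hence $a_{jk,jl}$ commutes with each of $a_{ij,ik},a_{ij,il},a_{ik,il}$. Therefore
$$
h\big((a''_{ijk}a''_{ijl}a''_{ikl}a''_{jkl})^{2}\big)=(a_{ij,ik}a_{ij,il}a_{ik,il})^{2}\,a_{jk,jl}^{2}=1,
$$
using (4) for $a_{jk,jl}^{2}=1$. This establishes that all relations are preserved, so $h$ is a homomorphism.

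I expect the only genuine obstacle to be bookkeeping: making sure the order of the three super-indices coming out of the first three factors genuinely matches the left-hand pattern $a_{p,q}a_{p,r}a_{q,r}$ of the exchange/tetrahedron relation in $G_{N}^{2}$ (rather than some other ordering), and checking that the commutativity hypothesis in (5) is stated tightly enough to guarantee the four super-indices $ij,ik,pq,pr$ are pairwise distinct in every case that can actually arise from the geometry. Both of these are finite, mechanical checks with no conceptual difficulty; everything else reduces immediately to the three defining relations of $G_{N}^{2}$.
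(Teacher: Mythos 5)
Your verification is correct and is exactly the direct relation-by-relation check the paper intends (and leaves to the reader), carried out in the same style as the proof of Lemma \ref{l1}: relation (4) maps to an involution relation, (5) to far commutativity of four pairwise distinct ordered-pair indices, and (6) splits as $(a_{ij,ik}a_{ij,il}a_{ik,il})^{2}a_{jk,jl}^{2}=1$ since $a_{jk,jl}$ commutes with the other three factors. Nothing further is needed.
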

The check is left to the reader.

As in the case of $`G_{n}^{3}$,
we get a sufficient minimality
condition for elements from $``G_{n}^{3}$.

\begin{rk}
The composite map $PB_{n}\mapsto G_{N}^{2}$
can be constructed without mentioning the group
 $``G_{n}^{3}$.
Namely, when considering the braid
$\alpha\in PB_{n}$ as a dynamics of motion of  $n$ points inside
the unit circle, to each moment when some three points  $i,j,k$ belong to the same circle tangent to the absolute,
we associate the product $a_{ij,ik}a_{ij,ik}a_{ik,jk},$
if when walking along the circle starting from the tangent point, we encounter
these points in the order $i,j,k$.
\end{rk}

The author is grateful to I.M.Nikonov and D.A.Fedoseev  for various fruitful discussions.


\begin{thebibliography}{100}

\bibitem{Bardakov} V.G.Bardakov, The Virtual and Universal Braids, {\em Fundamenta Mathematicae}, 184 (2004), P. 1-18.


\bibitem{Coxeter} V.O.Manturov, {On groups $G_{n}^{2}$ and Coxeter groups}, Russ.Math. Surv., vol. 72 (2017),
{\bf 2}, pp. 234-235,
{\em
http://arxiv.org/abs/1512.09273}


\bibitem{Great} V.O.Manturov, Non-Reidemeister Knot Theory and Its Applications in Dynamical Systems, Geometry, and Topology, {\em
http://arxiv.org/abs/1501.05208}


\bibitem{HigherGnk} V.O.Manturov, The Groups $G_{n}^{k}$ and fundamental groups of configuration spaces, {\em J. Knot Theory \& Ramifications, 2017, Vol. 26.}

\bibitem{MN} V.O.Manturov, I.M.Nikonov,  {\em On braids and groups $G_{n}^{k}$},
{J. Knot Theory \& Ramifications, 2015, Vol. 24, No.13.}


\end{thebibliography}
 \end{document}